\newtheorem{Theorem}{Theorem}[section]
\newtheorem{Lemma}[Theorem]{Lemma}
\newtheorem{Proposition}[Theorem]{Proposition}
\newtheorem{Corollary}[Theorem]{Corollary}
\theoremstyle{remark}
\newtheorem{Remark}[Theorem]{Remark} 
\newtheorem{Definition}[Theorem]{Definition}
\begin{document}

\def\Z{{\mathcal Z}}
\def\V{{\mathcal V}}
\def\F{{\mathcal F}}
\def\I{{\mathcal I}}
\def\U{{\mathcal U}}
\def\Iz{{\mathcal I_0}}
\def\H{{\mathcal H}}
\def\G{{\mathcal G}}
\def\Gp{{\mathcal {G'}}}
\def\Hp{{\mathcal {H'}}}
\def\R{{\mathcal R}}
\def\E{{\mathcal E}}

\def\Paths{{\mathcal P}}

\def\variety{Y}
\def\genvariety{\Z^{m,n}_{r,k}}
\def\basevar{\Z^{m,n}_2}

\def\LT{LT(J)}
\def\SC{\Delta_{\LT}}

\def\A{{\mathbf {A}}}

\def\del#1#2#3#4{\delta_{[#1,#2][#3,#4]}}
\def\eps#1#2#3#4{\epsilon_{[#1,#2][#3,#4]}}
\def\pho#1#2#3#4#5#6{\rho_{[#1,#2,#3][#4,#5,#6]}}
\def\lam#1#2#3#4#5#6{\lambda_{[#1,#2,#3][#4,#5,#6]}}
\def\sigh#1#2#3#4#5#6{\psi_{[#1,#2,#3][#4,#5,#6]}}

\def\mp{\marginpar}

\hfill To appear in \textit{Journal of Pure and Applied Algebra} \\


\title[Complex Associated to Jet Scheme of Determinantal Variety]{Initial Complex Associated to a Jet Scheme of a Determinantal Variety}
\author{Boyan Jonov}
\address{Dept. of Mathematics\\California State University
Northridge\\Northridge CA 91330\\U.S.A.\\Current address: Dept of Mathematics\\University of California Santa Barbara\\Santa Barbara CA 93106} \email{boyan@math.ucsb.edu}


\subjclass[2000]{Primary 13F55}

\keywords{Determinantal variety, Jet scheme, Stanley-Reisner complex,
Cohen-Macaulay}

\begin{abstract}
We show in this paper that the principal component of the first order jet scheme over
the classical determinantal variety of $m\times n$ matrices of rank at most
$1$ is arithmetically Cohen-Macaulay, by showing that an associated Stanley-Reisner simplicial complex is shellable.
\end{abstract}

\maketitle


\section{Introduction} \label{SecnIntro}

Let $F$ be an algebraically closed field and $\A_F^{k}$ the affine
space of dimension $k$ over $F$. By a \textsl{variety} in
$\A_F^{k}$ we will mean the zero set of a collection of
polynomials over $F$ in $k$ variables; in particular, our
varieties are not assumed irreducible. In \cite{KoSe1} and \cite{KoSe2}, Ko\v{s}ir
and Sethuraman had studied jet schemes over classical determinantal varieties, and
had described their components in a large number of cases. In particular, they had
shown  that the variety of first-order jets, or loosely the ``algebraic tangent
bundle,'' over the determinantal variety of $m\times n$ matrices ($m \le n$) of rank
at most
$1$ has two components when $m \ge 3$.  One component is simply the affine space
$\A_F^{mn}$ supported over the origin.  The other component, which is much more
interesting, is the closure of the set of
tangents at the nonsingular points of the base determinantal variety.  We denote
this component by $Y$, and refer to it as the \textsl{principal component.} (When
$m=2$, the variety of first-order jets is irreducible, and coincides with the
principal component $Y$.) The goal of this paper is to show that $Y$ is
arithmetically Cohen-Macaulay, i.e., its coordinate ring is Cohen-Macaulay.

Consider the truncated polynomial ring
$F[t]/(t^2)$, and let $X(t)=(f_{i,j}(t))_{i,j}$ be the generic $m\times n$ ($m \le n$)
matrix over this ring; thus, the $(i,j)$  entry of $X(t)$ is of the
form $ f_{i,j}(t) = x_{i,j} + y_{i,j} t $, where $1\le i \le m$, $1 \le j
\le n$, and $ x_{i,j}, y_{i,j}$ are variables.
Let $I$ be the ideal of $R = F[x_{i,j}, y_{i,j}]$, $1 \leq i \leq m, 1 \leq
j \leq n$, generated by the coefficients of powers of $t$ in each $2
\times 2$ minor of the generic matrix $X(t)$. Then the variety of first-order jets
over the $m\times n$ matrices ($m \le n$) of rank at most
$1$ is precisely the zero set of $I$.  Let $J$ be the ideal of the principal
component $Y$.  In \cite{KoSe2}, Ko\v{s}ir and Sethuraman showed that $I$ is
radical, and further, determined a Groebner basis for both $I$ and $J$ for the
graded reverse
lexicographical order using the following scheme: $ y_{1,1} >
y_{1,2}> \dots> y_{1,n} > y_{2,1} > \dots >y_{2,n}>\dots> y_{m,n}>
x_{1,1} > x_{1,2} > \dots> x_{1,n} > x_{2,1} > \dots >x_{2,n}>\dots>
x_{m,n} $ (see \cite[Theorem 2.4]{KoSe2},  \cite[Proposition 3.3]{KoSe2}, and also
\cite[Remark 2.2]{KoSe2}).

 It follows easily from the description in \cite[Theorem 2.4]{KoSe2} of the Groebner
basis $G$ of $J$ that the leading term ideal of $J$, $\LT := \langle lm(g); g\in G
\rangle$, is generated by the following family of monomials:

\begin{Proposition} \label{CorGBI0}
(Generators of $\LT$) The following families of monomials generate
$\LT$:  $A =\{x_{i,l}x_{j,k}\ |\ 1 \le i < j\le m,\ 1\le k < l\le
n\}$, $B = \{ x_{i,k}y_{j,l}\ |\ 1\le i < j \le m,\ 1\le k < l \le
n\}$, $C = \{x_{k,p}y_{j,q}y_{i,r}\ |\ 1 \le i < j \le k \le m,\ 1
\le p < q < r \le n\}$, $D = \{x_{i,r}y_{j,q}y_{k,p}\ |\ 1 \le i < j
< k \le m,\ 1 \le p < q \le r \le n\}$, and $E =
\{y_{i,r}y_{j,q}y_{k,p}\ |\ 1 \le i < j < k \le m,\ 1 \le p < q < r
\le n\}$.
\end{Proposition}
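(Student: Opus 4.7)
Since \cite[Theorem 2.4]{KoSe2} already supplies a Gröbner basis $G$ of $J$ with respect to the graded reverse lexicographic order fixed in the introduction, the definition $\LT = \langle lm(g) : g \in G\rangle$ reduces the proposition to a mechanical task: compute the leading monomial $lm(g)$ for each polynomial $g$ in the Gröbner basis and verify that the resulting set of monomials coincides exactly with $A \cup B \cup C \cup D \cup E$.

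The plan is to go through the generators of $G$ listed in \cite[Theorem 2.4]{KoSe2} by type. First, the constant-in-$t$ coefficients of the $2\times 2$ minors of $X(t)$ are the ordinary $2\times 2$ minors $x_{i,k}x_{j,l} - x_{i,l}x_{j,k}$ with $i<j$ and $k<l$. Among the two terms of equal total degree the ordering makes $x_{j,l}$ the smallest variable at which they differ, and reverse lexicographic order picks the monomial with smaller exponent on that variable; this gives leading monomial $x_{i,l}x_{j,k}$, producing family $A$. Next, the coefficients of $t$ in the $2\times 2$ minors have the form $x_{i,k}y_{j,l} + y_{i,k}x_{j,l} - x_{i,l}y_{j,k} - y_{i,l}x_{j,k}$; an analogous comparison in degrevlex, working through the variables from smallest to largest, identifies $x_{i,k}y_{j,l}$ as leading, producing family $B$. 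Finally, the additional cubic generators produced by \cite[Theorem 2.4]{KoSe2} beyond the generators of $I$ split into three subfamilies, whose leading monomials are read off in exactly the same fashion and match the index patterns defining $C$, $D$, and $E$.

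The only subtlety I expect is keeping the direction of the reverse lexicographic order straight: even though the $y$'s dominate the $x$'s in the stated order, the \emph{leading} monomial of a polynomial of fixed total degree is the one with the \emph{smaller} exponent on the smallest variable at which the competing monomials differ. Once this convention is pinned down, each leading-term computation is a routine variable-by-variable check. The only real content beyond that is the observation that the index restrictions on the Gröbner basis elements given in \cite[Theorem 2.4]{KoSe2} coincide term-by-term with the index restrictions defining $A, B, C, D, E$, so no extra or missing monomials appear.
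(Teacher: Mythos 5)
Your proposal is correct and takes essentially the same route as the paper: the paper offers no argument beyond the remark that the proposition ``follows easily from the description in \cite[Theorem 2.4]{KoSe2}'' of the Gr\"obner basis $G$ of $J$, and you are simply carrying out that reading-off of leading monomials, with the degrevlex rule (smaller exponent on the smallest variable where two monomials of the same degree differ wins) stated and applied correctly for families $A$ and $B$.
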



Since $\LT$ is generated by squarefree monomials
we can
construct the Stanley-Reisner complex $\SC$ of $\LT$: this is the
simplicial complex on vertices $\{x_{i,j},y_{i,j}\ |\ 1 \le i \le
m, 1 \le j \le n\}$ whose corresponding Stanley-Reisner ideal (see
\cite[Chap. 1]{MiSt} or \cite[Chap. 5]{BrHe}) is $\LT$.  The simplicial complex is
defined by the relation
 $x_{i_1,j_1}
\dots x_{i_k, j_k}
\dots x_{i_s, j_s}y_{i^{'}_1, j^{'}_1}
\dots y_{i^{'}_l, j^{'}_l}
\dots y_{i^{'}_t, j^{'}_t}$, $\ 1 \le i_k, i^{'}_l \le m, 1 \le j_k, j^{'}_l \le n$,
is a face of $\SC$ if, as a monomial, it does not belong to $\LT$.

We will enumerate all the facets of $\SC$ and we will describe an explicit ordering
of the facets which will show that $\SC$ is a shellable simplicial complex. By
standard results, shellability of $\SC$ allows us to conclude the following main
result of the paper (a result that has been independently obtained by Smith and Weyman  in \cite{SW} as well, using their geometric technique for computing syzygies):

\begin{Theorem} \label{Cohen-Macaulay}
The coordinate ring of $Y$, i.e., $R/J$, is Cohen-Macaulay.
\end{Theorem}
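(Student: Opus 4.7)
The strategy is a standard three-step reduction. First, because $\LT$ is the initial ideal of $J$ for our term order, the quotient $R/J$ inherits Cohen--Macaulayness from $R/\LT$; this is a classical Gr\"obner-basis result, following from the flat degeneration of $R/J$ to $R/\LT$ and the fact that dimension and depth behave compatibly along this degeneration. Second, since $\LT$ is generated by squarefree monomials, $R/\LT$ is precisely the Stanley--Reisner ring of $\SC$, and by a standard theorem (see \cite[Chap.~5]{BrHe}) a pure shellable simplicial complex has a Cohen--Macaulay Stanley--Reisner ring. The proof of Theorem~\ref{Cohen-Macaulay} therefore reduces entirely to exhibiting a shelling of $\SC$.

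My plan is first to use Proposition~\ref{CorGBI0} to give an explicit combinatorial description of the faces of $\SC$: a subset $S$ of the $2mn$ vertices is a face iff it avoids every monomial in $A \cup B \cup C \cup D \cup E$. Concretely, the $x$-vertices of $S$ must form a weakly monotone configuration in the grid (from $A$), no $x$-vertex may sit strictly above-and-to-the-left of a $y$-vertex in $S$ (from $B$), and no triple involving $y$-vertices (possibly together with one $x$-vertex) may realize the ``anti-staircase'' patterns forbidden by $C$, $D$, $E$. From this description I would enumerate the facets, parametrized by a combinatorial datum such as a monotone lattice path in the $m \times n$ grid together with a compatible choice of $y$-support. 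Counting these parameters should show all facets have the same cardinality, so that $\SC$ is pure of the expected dimension $\dim Y - 1$.

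Next, I would linearly order the facets lexicographically with respect to their parameters, chosen so that ``earlier'' facets correspond to pushing the $x$-support up and to the left, and verify the shelling condition: for each facet $F_i$ beyond the first, every minimal new face of $F_i$ is obtained by deleting a single distinguished vertex, and replacing that vertex by a unique ``predecessor'' yields an earlier facet. Each of the five forbidden patterns $A$--$E$ should dictate exactly one such swap, so that the shelling order mirrors the Gr\"obner order on $\LT$.

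The main obstacle will be the verification over families $C$, $D$, and $E$, which involve three-variable monomials and whose forbidden patterns interact nontrivially with those of $A$ and $B$. A single swap aimed at repairing a $C$- or $D$-violation may inadvertently reintroduce a $B$- or $E$-violation elsewhere in the facet, so the ordering of facets and the repair rules must be chosen in careful tandem. I expect this case analysis, though elementary, to be the most delicate step, and it is where the bulk of the combinatorial work will reside.
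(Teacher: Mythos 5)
Your first paragraph is exactly the paper's reduction: the Gr\"obner degeneration transfers Cohen--Macaulayness from $R/\LT$ to $R/J$ (the paper cites \cite[Corollary 8.31]{MiSt}), $R/\LT$ is the Stanley--Reisner ring of $\SC$ since $\LT$ is squarefree, and a pure shellable complex has a Cohen--Macaulay Stanley--Reisner ring (the paper cites \cite[Theorem 5.1.13]{BrHe}). So at the level of strategy you and the paper agree completely.

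The gap is that everything after that first paragraph is a plan, not a proof, and it is precisely where all the content of the paper lives. You gesture at ``a monotone lattice path together with a compatible choice of $y$-support,'' but the actual facet description --- $F_x$ is a lattice path from some $x_{i,j}$ to $x_{m,n}$, and $F_y$ is a pair of \emph{non-intersecting} lattice paths from $y_{1,1}, y_{2,1}$ to $y_{i,n}, y_{m,j}$ with endpoints determined by $\mu(F)=x_{i,j}$ --- takes several lemmas to pin down (Lemmas~\ref{other x's present} and \ref{y's present} and Theorem~\ref{facets}), and without it you cannot even see that $\SC$ is pure. Likewise the shelling order is not merely ``lexicographic in the parameters'': the paper orders by the position of $\mu$, then by how far right $F_x$ bends, then by the upper $y$-path, then by the lower $y$-path, and in the verification the replacement vertex that produces the earlier facet $R$ sometimes requires a two-step diagonal move (the ``double cascade'' case when $y_{c+1,d-1}$ already lies on the lower path). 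One small warning sign in your sketch: you propose that earlier facets push the $x$-support ``up and to the left,'' but in the paper's order it is the opposite --- $P<Q$ when $\mu(P)$ is \emph{below or to the right of} $\mu(Q)$, so the facet concentrated in the bottom-right corner comes first. Whether the reversed order you suggest also shells $\SC$ is not something you can take for granted; it would need its own case analysis. As written, then, the proposal reduces the theorem correctly but leaves the theorem unproved: the facet classification and the explicit shelling with its verification are the entire substance, and both are still to be done.
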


We wish to thank Professor Aldo Conca for some very valuable discussions
during the writing of the paper. We also wish to thank Professor Toma\v{z} Ko\v{s}ir  for being generous with his time and his encouragement. This paper constitutes our M.S. thesis
at California State University Northridge, and we wish to thank
Professor B.A. Sethuraman for suggesting this problem and for his
constant encouragement.



\section{Describing the Facets of $\SC$} \label{SecnElementsSC}

\begin{figure}[h]
\includegraphics[scale=0.35]{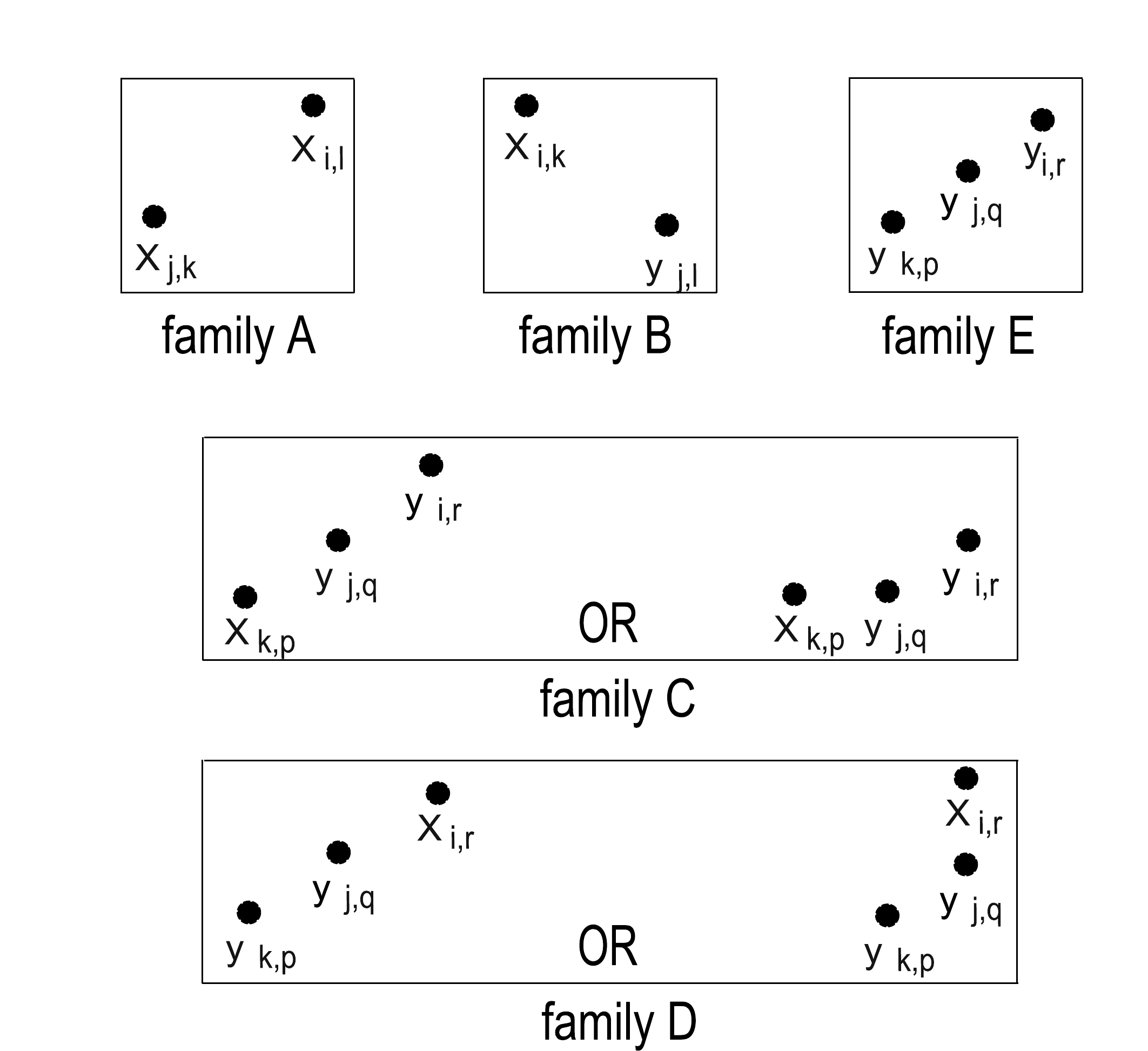}
\caption{ \label{family2}}
\end{figure}

It would be helpful in what follows to visualize the structure of
the monomials in the families $A$, $B$, $C$, $D$, and $E$ as described in Proposition \ref{CorGBI0}. For this, see Figure \ref{family2}. In this paper, we will visualize a monomial as being positioned in a matrix, where each variable of the monomial is
located in the matrix's entry corresponding to the index of the
variable.

In this section, we will enumerate all facets of $\SC$.  First, some
notation: we will denote a facet $F$ of the simplicial complex $\SC$
by $F=F_{x}F_{y}$, where $F_{x}$ is a string composed of vertices
$x_{i,j}$'s and $F_{y}$ is a string composed of vertices
$y_{i,j}$'s.  We will view each of $F_x$ and $F_y$ as both strings of vertices
or monomials, depending on the context. Note that $F_{x}F_{y} \in
\SC$ if, as a monomial, $F_{x}F_{y}$ does not belong in the ideal
$\LT$ if and only if $F_{x}F_{y}$ is not divisible by the generators of $\LT$ (see Proposition \ref{CorGBI0}).

We will start by showing a relation between the facets
of $\SC$ and those of the corresponding simplicial complexes
arising from classical determinantal varieties.   We refer to the
excellent survey paper of Bruns and Conca \cite{BrCo}. In this
paper,  the authors consider the facets of $\Delta_t$: the
Stanley-Reisner complex attached to the ideal $LT(I_t)$, which is
generated by the leading terms of the $t\times t$ minors of the
generic $m\times n$ matrix $(w_{i,j})$. The order they use is one
in which the leading term of a minor is the main diagonal, and it
is known that the leading terms of the $t\times t$ minors generate
the ideal of leading terms of $I_t$.

 The key result for us is \cite[Prop.
6.4]{BrCo}, where they enumerate the facets of $\Delta_t$. This is
a purely combinatorial result that enumerates the maximal subsets
of $V = \{ w_{i,j}: i \leq m, j \leq n \}$ that intersect any
$t$-subset of $V$ arising from the diagonal of some $t\times t$
submatrix of $(w_{i,j})$ in at most $t-1$ places, and can be
applied by symmetry to enumerate the maximal subsets of $V$ that
intersect any $t$-subset of $V$ arising from the antidiagonal of
some $t\times t$ submatrix of $(w_{i,j})$ in at most $t-1$ places.
We quote this result as:
\begin{Proposition} \label{Prop_classical_facets}(\cite[Prop.
6.4]{BrCo}).  Let $I_t$ be the ideal of $F[\{w_{i,j}\}]$ generated
the $t\times t$ minors of the generic $m\times n$ matrix
$(w_{i,j})$.  Write $LT(I_t)$ for the ideal generated by the lead
terms of the $t\times t$ minors  with respect to the graded
reverse lexicographical order $w_{1,1} > w_{1,2} > \dots> w_{1,n}
> w_{2,1} > \dots
>w_{2,n}>\dots> w_{m,n}$.  Write $\Delta_t$ for the Stanley-Reisner
complex of $LT(I_t)$.  Then the facets of $\Delta_t$ correspond to
all families of non-intersecting paths from
$w_{1,1},w_{2,1},\dots,w_{t-1,1}$ to $w_{m,n},w_ {m,n-1},\dots w_{m,n-t+2}$.
\end{Proposition}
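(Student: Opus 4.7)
The plan is to establish the claimed bijection between facets of $\Delta_t$ and non-intersecting families of $t-1$ lattice paths, relying on two ingredients: a routine check that in the given graded reverse lexicographic order the leading term of the $t\times t$ minor on rows $i_1<\cdots<i_t$ and columns $j_1<\cdots<j_t$ is the \emph{anti-diagonal} monomial $w_{i_1,j_t}w_{i_2,j_{t-1}}\cdots w_{i_t,j_1}$ (on each row of the minor, grevlex selects the largest variable, i.e., the smallest column available); and Mirsky's dual of Dilworth's theorem applied to the grid poset $(i,j)\prec(i',j')\Leftrightarrow i<i'\text{ and }j>j'$. A face of $\Delta_t$ is then exactly a set of cells containing no chain of length $t$ in $\prec$, i.e., no anti-diagonal of any $t\times t$ submatrix.

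For the forward direction (path family yields a facet), let $(P_1,\ldots,P_{t-1})$ be a non-crossing family with $P_k$ running from $(k,1)$ to $(m,n-t+k+1)$ via down and right unit steps, and let $F$ be the union of their cells. If $F$ contained cells $(a_1,b_1),\ldots,(a_t,b_t)$ with $a_1<\cdots<a_t$ and $b_1>\cdots>b_t$, pigeonhole would force two of them onto the same path; but cells along a down-and-right path have weakly increasing row \emph{and} column indices, contradicting the strict decrease in the $b$'s. Hence $F$ is a face. To see $F$ is maximal, pick any $(a_0,b_0)\notin F$; the non-crossing structure lets one select, for each $k$, a cell $c_k\in P_k$ such that $\{c_1,\ldots,c_{t-1},(a_0,b_0)\}$ forms an anti-diagonal. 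Concretely, for each $P_k$ lying ``above-right'' of $(a_0,b_0)$ one traces $P_k$ until it meets column $b_0$ and picks the last cell strictly to its upper right; for the paths ``below-left'' one performs the mirror image. A short bookkeeping verifies strict monotonicity of the extracted rows and columns relative to $(a_0,b_0)$.

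Conversely (every facet arises this way), let $F$ be a facet. Since $F$ has no chain of length $t$ in $\prec$, Mirsky's theorem partitions $F$ into $t-1$ antichains $A_1,\ldots,A_{t-1}$; an antichain in $\prec$ is precisely a set whose cells, sorted by row, have weakly increasing columns, so it fits inside some down-and-right lattice walk. I then invoke maximality of $F$ twice: first, each $A_k$ can be upgraded to a contiguous lattice path, since any gap cell between two consecutive cells of $A_k$ could be appended to $F$ without introducing a new $t$-anti-diagonal, contradicting maximality; second, the canonical source $(k,1)$ and sink $(m,n-t+k+1)$ of the $k$-th path are forced by the same fill-in argument at the endpoints together with the disjointness $F=\bigsqcup A_k$. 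The cell-count identity $\sum_{k=1}^{t-1}(m+n-2k+1)=(t-1)(m+n+1-t)$ serves as a consistency check.

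The main obstacle is the converse. Mirsky only produces \emph{some} antichain decomposition of $F$, and converting those antichains into canonical lattice paths with prescribed sources and sinks requires using the maximality of $F$ delicately: one must verify that all fill-in cells along a path can be added simultaneously without creating a new $t$-anti-diagonal, and rule out degenerate configurations where two paths share an endpoint or a path starts from a non-extremal row. I expect that inducting on $t$, by peeling off the outermost path from $(t-1,1)$ to $(m,n)$ and reducing to a smaller Stanley--Reisner complex on the complementary cells, will streamline both the fill-in step and the endpoint normalization.
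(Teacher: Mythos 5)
The paper does not prove Proposition~\ref{Prop_classical_facets}: the result is quoted verbatim from Bruns and Conca \cite[Prop.\ 6.4]{BrCo}, and the sentence immediately preceding it reads ``We quote this result as.'' So there is no in-paper proof to compare against; you are supplying a proof of a cited result. Your overall plan is the standard one (leading term of each $t\times t$ minor under this grevlex order is the anti-diagonal monomial, so faces of $\Delta_t$ are exactly subsets of the grid with no length-$t$ chain in the poset $(i,j)\prec(i',j')\Leftrightarrow i<i',\ j>j'$; then Mirsky plus maximality), and the first ingredient is indeed a routine check.

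Two concrete problems remain. First, your source--sink pairing is reversed: you route $P_k$ from $(k,1)$ to $(m,n-t+k+1)$, so $(1,1)\mapsto(m,n-t+2)$ and $(t-1,1)\mapsto(m,n)$. With down-and-right steps this pairing admits \emph{no} vertex-disjoint family at all; already for $m=n=3$, $t=3$ every choice of a path $(1,1)\to(3,2)$ collides with every choice of a path $(2,1)\to(3,3)$. The geometrically forced pairing is $(k,1)\mapsto(m,n-k+1)$. The same confusion shows up when you propose to ``peel off the outermost path from $(t-1,1)$ to $(m,n)$'' --- those two cells are not the endpoints of a single path in either pairing. Second, the converse is a genuine gap, as you acknowledge, and it is precisely the nontrivial content of the cited proposition. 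Mirsky produces \emph{some} antichain decomposition, but an antichain need not be a contiguous lattice path, the fill-in cells you wish to insert may belong to (or collide with the fill-ins for) a different antichain, and nothing yet forces the prescribed endpoints $(k,1)$ and $(m,n-k+1)$. You assert these can be repaired ``using maximality delicately'' and ``by induction on $t$,'' but the simultaneous-fill-in step and the endpoint normalization are exactly where the work is, and they are not carried out. The forward maximality argument is also only gestured at: near extreme cells such as $(m,1)$ the recipe ``trace $P_k$ until it meets column $b_0$ and pick the last cell strictly to its upper right'' does not visibly produce cells with strictly decreasing columns. In short: correct skeleton, wrong pairing, and the hard half of the bijection is still open.
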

Here, a path from $w_{a,b}$ to $w_{c,d}$, given $a\le c$ and $b\le d$,
is a sequence of vertices starting at $w_{a,b}$ and ending
at $w_{c,d}$ where each vertex in the sequence is either one step to
the right or one step down from the previous vertex.  A
non-intersecting path of the kind described in the last line of the
proposition above is a union of paths from $w_{i,1}$ to $w_{m,n-i+1}$
whose pairwise intersection is empty.  (It is known that for
the graded reverse lexicographic order as well, the leading terms
of the $t\times t$ minors generate the ideal of leading terms of
$I_t$.)

We observe that the monomials in $A$ correspond to the generators
of $LT(I_2)$  and the monomials in $E$ correspond to the
generators of $LT(I_3)$ (with the order specified in Proposition \ref{Prop_classical_facets}). So, for a facet facet $F=F_{x}F_{y}$ of $\SC$,  we have that $F_x$ is not in $LT(I_2)$ and $F_y$ is not in $LT(I_3)$. Therefore, by Proposition \ref{Prop_classical_facets}, we can state the following lemma:

\begin{Lemma} \label{lemma_Fx and Fy subsets of classical case}
$F_x$ is a subset of a path from $x_{1,1}$ to $x_{m,n}$ and $F_y$
is a subset of a pair of non-intersecting paths from $y_{1,1}$,
$y_{2,1}$  to $y_{m,n}$, $y_{m,n-1}$
\end{Lemma}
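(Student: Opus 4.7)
The plan is to treat $F_x$ and $F_y$ separately and apply Proposition \ref{Prop_classical_facets} to each piece. First I would observe that, since $F$ is a face of $\SC$, the monomial $F_x F_y$ does not lie in $\LT$; by Proposition \ref{CorGBI0}, $\LT$ contains the families $A$ and $E$, so no element of $A \cup E$ divides $F_x F_y$. The families $B$, $C$, $D$ mix $x$- and $y$-variables and hence impose no constraint on $F_x$ or $F_y$ individually, so only $A$ and $E$ matter at this step.

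For $F_x$: the monomials in $A$ are purely in $x$, so the divisibility restriction reduces to $F_x$ alone. Under the grevlex order used in the excerpt, the leading term of a $2\times 2$ minor of the generic $m\times n$ matrix $(x_{i,j})$ is exactly its antidiagonal, and the collection of these antidiagonals is precisely $A$; the excerpt already notes this correspondence. Hence $F_x$ is a face of the Stanley--Reisner complex $\Delta_2$ attached to $LT(I_2)$ on the $x$-variables, and Proposition \ref{Prop_classical_facets}, applied in the antidiagonal form obtained via the left--right symmetry mentioned in the text, states that every facet of $\Delta_2$ is a single path from $x_{1,1}$ to $x_{m,n}$. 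Therefore $F_x$ sits inside such a path.

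The argument for $F_y$ is parallel, with $E$ replacing $A$ and $\Delta_3$ replacing $\Delta_2$: the elements of $E$ are the leading antidiagonals of the $3\times 3$ minors of the generic $y$-matrix, so $F_y$ is a face of $\Delta_3$ on the $y$-variables, and Proposition \ref{Prop_classical_facets} places it inside a facet of $\Delta_3$, which is a pair of non-intersecting paths from $y_{1,1}, y_{2,1}$ to $y_{m,n}, y_{m,n-1}$. I do not anticipate any substantive obstacle; the only bookkeeping point is matching the grevlex order (which produces antidiagonal leading terms) with the diagonal-based statement of Proposition \ref{Prop_classical_facets}, and this is already handled by the symmetry remark in the excerpt.
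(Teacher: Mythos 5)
Your proof is correct and follows essentially the same route as the paper: it notes that $A$ and $E$ are, respectively, the generators of $LT(I_2)$ and $LT(I_3)$ (while $B$, $C$, $D$ are mixed and hence irrelevant for $F_x$ or $F_y$ alone), and then applies Proposition \ref{Prop_classical_facets} to place $F_x$ inside a facet of $\Delta_2$ and $F_y$ inside a facet of $\Delta_3$.
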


We will continue by showing that for each facet $F=F_{x}F_{y}$ of $\SC$,
$F_{x}$ is a non-empty string that contains at least two $x$-vertices. It is straightforward to see that $x_{m,n}F$  can not be divisible by any of the generators of  $\LT$. Hence, maximality of $F$ implies that $x_{m,n}$ is already in $F$. The next lemma shows that, in addition to $x_{m,n}$, $F$ must contain another $x$-vertex.

\begin{Lemma} \label{other x's present}
Let $F$ be a facet of the simplicial complex $\SC$. Then $F$ must contain at least two $x$-vertices, one of which is $x_{m,n}$ .
\end{Lemma}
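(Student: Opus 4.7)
The plan is to argue by contradiction, showing that whenever $x_{m,n}$ is the only $x$-vertex in a supposed facet $F$, the vertex $x_{m,n-1}$ can be adjoined to $F$ without leaving $\SC$, contradicting maximality. The paragraph preceding the lemma already handles the existence of $x_{m,n}\in F$: none of the families $A,B,C,D$ admits a generator with an $x$-factor of index $(m,n)$, because the strict inequalities $i<j\le m$ or $k<l\le n$ would force $j>m$ or $l>n$. So the only thing left is to exhibit a second $x$-vertex of $F$.

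Suppose, toward a contradiction, that $F=\{x_{m,n}\}\cup F_y$ is a facet with $F_y$ consisting only of $y$-vertices. I claim $F\cup\{x_{m,n-1}\}$ is still a face of $\SC$. By Proposition \ref{CorGBI0} it suffices to verify that no generator from $A,B,C,D,E$ divides the resulting monomial. Family $E$ involves no $x$-vertex at all, so it is untouched. For family $A$, both $x_{m,n}$ and $x_{m,n-1}$ lie in row $m$, so no pair $x_{i,l}x_{j,k}$ with $i<j$ can be assembled from the $x$-part. For family $B$, a divisor $x_{m,n-1}y_{j,l}$ would require $m<j\le m$, which is impossible. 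For family $D$, writing $x_{m,n-1}$ as $x_{i,r}$ gives $i=m$, again violating $i<j\le m$. For family $C$, writing $x_{m,n-1}$ as $x_{k,p}$ forces $p=n-1$, and the chain $p<q<r\le n$ then has no integer solution.

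Consequently $F\subsetneq F\cup\{x_{m,n-1}\}\in\SC$, contradicting maximality of $F$. The argument is essentially bookkeeping with the index constraints from Proposition \ref{CorGBI0}; the only genuine idea is the choice of $x_{m,n-1}$ as the second candidate, whose position in row $m$ and column $n-1$ is precisely what makes each potential divisibility relation collapse by a trivial index comparison. I do not anticipate any real obstacle beyond checking the five families in turn.
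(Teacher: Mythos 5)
Your proof is correct and follows essentially the same approach as the paper: assume $x_{m,n}$ is the only $x$-vertex, adjoin a neighboring vertex in row $m$ or column $n$, and check that none of the five families can divide the enlarged monomial, contradicting maximality. The paper uses $x_{m-1,n}$ while you use $x_{m,n-1}$; these two choices are symmetric and the index-bookkeeping works out identically in both cases, so the arguments are interchangeable.
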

\begin{proof}
We already know that $F$ must have $x_{m,n}$. Suppose that it is the only $x$-vertex that $F$ has. Consider then $x_{m-1,n}F$. We can easily check then that $x_{m-1,n}F$ is not divisible by any of the monomials in $A$, $B$, $C$, $D$, or $E$ . So, $x_{m-1,n}F \in \SC$ and maximality of $F$ implies that $x_{m-1,n}$ must already be in $F$, a contradiction to the assumption that the only $x$-vertex that $F$ contains is $x_{m,n}$.
\end{proof}


\noindent\textsl{Notation:} Let $F=F_{x}F_{y}$ be any facet and recall that, by Lemma \ref{lemma_Fx and Fy subsets of classical case}, $F_x$ is a subset of a path from $x_{1,1}$ to $x_{m,n}$. Thus, for any two $x$-vertices in $F_x$, one is always to the north, west, or north-west of the other. Let  $\mu(F)$ denote the $x$-vertex that is furthest to north and furthest to the west of all other $x$-vertices in $F_x$.  Thus, $\mu(F) = x_{i,j}$ implies that $i \le c$ and $j\le d$ for all $x_{c,d}$ in $F_x$ (see Figure \ref{facet}). Notice that $\mu(F) \neq x_{m,n}$ (Lemma \ref{other x's present}).

The next lemma deals with the $F_y$ part of a facet $F$ and, in particular, the lemma lists some of the $y$-vertices that must be present in a given facet.

\begin{Lemma} \label{y's present}
Let $F=F_{x}F_{y}$ be a facet of the simplicial complex $\SC$ with $\mu(F) = x_{i,j}$. Then $F$ must contain $y_{i,n}$ and $y_{m,j}$.
\end{Lemma}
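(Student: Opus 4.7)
The plan is to invoke maximality of $F$: to conclude that $y_{i,n}\in F$ and $y_{m,j}\in F$, it suffices to verify that $y_{i,n}F$ and $y_{m,j}F$, viewed as monomials, are not divisible by any generator of $\LT$. By Proposition~\ref{CorGBI0} this is a finite case check against the families $A,B,C,D,E$, and I would organize each check by the role the new vertex can play in a putative divisor. Two facts will be used repeatedly: (i) $\mu(F)=x_{i,j}$ forces $a\ge i$ and $b\ge j$ for every $x_{a,b}\in F_x$, and (ii) $F\in\SC$, so no submonomial of $F$ lies in $\LT$; in particular, the family-$B$ divisor $x_{i,j}y_{b,q}$ is forbidden whenever $i<b$ and $j<q$.

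For $y_{i,n}$: family $A$ is automatic. A family-$B$ divisor $x_{a,b}y_{i,n}$ would need $a<i$, ruled out by (i). In family $C$, written $\{x_{k,p}y_{j',q}y_{i',r}\}$, the only possible role for $y_{i,n}$ is that of $y_{i',r}$ (the other would need $r>n$); this would force an $x_{k,p}\in F_x$ with $p<q<n$ and some $y_{j',q}\in F_y$ with $j'>i$, but (i) gives $p\ge j$ while (ii) applied to $x_{i,j}y_{j',q}$ gives $q\le j$, a contradiction. Family $D$ is analogous: $y_{i,n}$ in the middle-$y$ role forces an $x_{a,n}$ with $a<i$, impossible by (i), and the other available role would need $q'>n$.

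The main obstacle is family $E$, because it contains no $x$-vertex and (i) does not apply directly. A divisor $y_{i,n}y_{b,q}y_{c,p}\in E$ forces $i<b<c$ and $p<q<n$, and I would argue as follows. Applying (ii) to $x_{i,j}$ paired with each of $y_{b,q}$ and $y_{c,p}$ (both have row $>i$) gives $q\le j$ and $p\le j$, hence $p<q\le j$. But then $x_{i,j}y_{b,q}y_{c,p}$ satisfies $i<b<c$ and $p<q\le j$, which are exactly the inequalities defining family $D$; so it is itself a divisor of $F$ in $\LT$, contradicting (ii). Thus $y_{i,n}F\in\SC$, and maximality of $F$ forces $y_{i,n}\in F$.

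The argument for $y_{m,j}$ is symmetric: $y_{m,j}$ has the maximal row index $m$, so in families $C,D,E$ it can only occupy the ``southwest-most'' role. Families $A,B,C,D$ are disposed of as before using (i) and (ii). For family $E$, a divisor $y_{a,r}y_{b,q}y_{m,j}$ requires $a<b<m$ and $j<q<r$. Applying (ii) to $x_{i,j}$ paired with each of $y_{a,r}$ and $y_{b,q}$ (both have column $>j$) gives $a\le i$ and $b\le i$, hence $a<b\le i$. But then $x_{i,j}y_{b,q}y_{a,r}$ satisfies $a<b\le i$ and $j<q<r$, the defining inequalities of family $C$, producing a divisor of $F$ in $\LT$ and contradicting (ii). Thus $y_{m,j}F\in\SC$, and maximality of $F$ forces $y_{m,j}\in F$.
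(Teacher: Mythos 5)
Your proposal is correct and follows essentially the same route as the paper: use maximality, so it suffices to rule out every generator of $\LT$ dividing $y_{i,n}F$ (resp.\ $y_{m,j}F$), which you do family by family using exactly the two facts the paper uses — the definition of $\mu(F)$ forcing all of $F_x$ into the south-east quadrant, and the absence of family-$B$ divisors inside $F$. The only cosmetic difference is in the family-$E$ case, where the paper phrases the contradiction as a dichotomy (``either $x_{i,j}y_{a,b}\in B$ or $x_{i,j}y_{c,d}y_{a,b}\in D$'') while you first rule out both $B$-pairings and then exhibit the resulting $D$- (resp.\ $C$-) monomial; this is the same argument reorganized slightly, and you also spell out the $y_{m,j}$ half, which the paper leaves as ``similar.''
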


\begin{proof}

To prove that $F$ contains $y_{i,n}$, it suffices to show that $y_{i,n}F$ is not divisible by a monomial in $A$, $B$, $C$, $D$, or $E$. Maximality of $F$  would then imply that $y_{i,n}$ must be in $F$.


Obviously, $y_{i,n}F$ can not be divisible by a monomial in $A$.
Also, $y_{i,n}F$ can not be divisible by a monomial in $B$ because otherwise it is easy to see that $y_{i,n}$ would have to be to the south-east of $\mu(F) = x_{i,j}$ - a contradiction. If $y_{i,n}F$ were divisible by a monomial in $D$, then another straightforward verification shows that $y_{i,n}$ must in a row below $\mu(F) = x_{i,j}$, which is impossible.




Suppose that $y_{i,n}F$ is divisible by a monomial in $C$. Then,
there must be some $x_{c,d}$ and $y_{s,t}$ in $F$ such that
$x_{c,d}y_{s,t}y_{i,n}$ is in $C$ (recall Figure \ref{family2}). But then the only possible location of $y_{s,t}$ is to the south-east of $x_{i,j}$. However, $x_{i,j}y_{s,t}$ is in $B$ and in $F$ - a contradiction.



Finally, suppose that $y_{i,n}F$ is divisible by a monomial in
$E$. Then there must be some $y_{a,b}$ and $y_{c,d}$ in $F$ such
that $y_{i,n}y_{a,b}y_{c,d}$ is in $E$ (recall Figure \ref{family2}).
In particular, it must be the case that, say, $y_{c,d}$ is to the
south-west of $y_{a,b}$ which, in turn, is to the south-west of
$y_{i,n}$. But then either $x_{i,j}y_{a,b}$ is in $B$ or $x_{i,j}y_{c,d}y_{a,b}$ is in $D$ - a contradiction in both cases.


So, $y_{i,n}F$ is not divisible by a monomial in $A$, $B$, $C$,
$D$, or $E$ which implies, as argued above, that $F$ must contain
$y_{i,n}$. We can similarly show that $F$ must contain $y_{m,j}$
as well.
\end{proof}

Now we are ready to describe the structure of all facets of $\SC$.
The following notation will be useful in the next theorem: for a
given facet $F=F_{x}F_{y}$ with $\mu(F) = x_{i,j}$, consider the
following partition of the $y$-vertices based on the index
$(i,j)$: $R_1 = \{y_{s,t} \ | \ s \leq i, j < t \}$, $R_2 =
\{y_{s,t} \ | \ s \leq i, t \leq j \}$, $R_3 = \{y_{s,t} \ | \ i <
s, t \leq j\}$, $R_4 = \{y_{s,t} \ | \ i < s, j < t\}$ (see Figure
\ref{regions1}) .

\begin{figure}[h]
\includegraphics[scale=0.15]{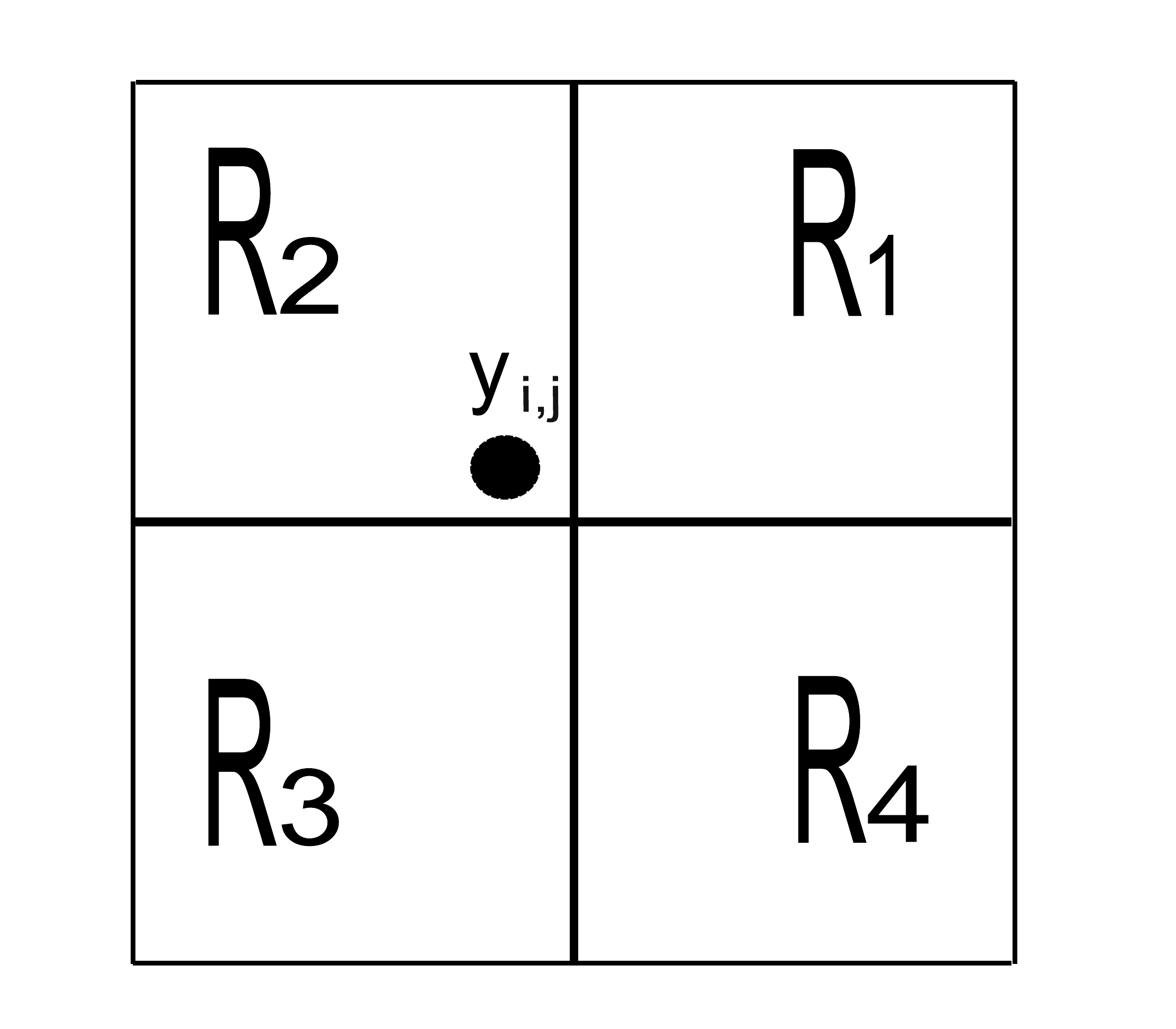}
\caption{ \label{regions1}}
\end{figure}

\begin{Theorem} \label{facets}
Let $F=F_{x}F_{y}$ be a facet of the simplicial complex $\SC$ with $\mu(F) = x_{i,j}$.
Then $F_{x}$ is a path from $x_{i,j}$ to  $x_{m,n}$ and
$F_{y}$ is a family of non-intersecting paths from $y_{1,1}$, $y_{2,1}$ to $y_{i,n}$, $y_{m,j}$.
\end{Theorem}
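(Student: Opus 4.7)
The plan is to prove the theorem in two stages: a \emph{containment} stage that pins down the ambient path (for $F_x$) and pair of non-intersecting paths (for $F_y$) in which the facet lives, and a \emph{fullness} stage where maximality of $F$ forces $F_x$ and $F_y$ to fill these paths out completely.

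For containment, Lemma \ref{lemma_Fx and Fy subsets of classical case}, combined with Lemma \ref{other x's present} and the definition of $\mu(F) = x_{i,j}$, immediately places $F_x$ on some lattice path $P$ from $x_{i,j}$ to $x_{m,n}$. For $F_y$ the crucial extra input is that no $y_{s,t} \in F_y$ can lie strictly south-east of $x_{i,j}$: such a vertex would give $x_{i,j}\, y_{s,t} \in B$, contradicting $F \in \SC$. Hence $F_y \subseteq R_1 \cup R_2 \cup R_3$. Combining this with the pair of non-intersecting paths supplied by Lemma \ref{lemma_Fx and Fy subsets of classical case} and with $y_{i,n}, y_{m,j} \in F_y$ (Lemma \ref{y's present}) then locates $F_y$ inside a pair of non-intersecting paths $Q_1 \cup Q_2$, with $Q_1$ running from $y_{1,1}$ to $y_{i,n}$ and $Q_2$ from $y_{2,1}$ to $y_{m,j}$.

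For fullness I would argue by contradiction. Given a vertex $v$ on $P$ (respectively on $Q_1 \cup Q_2$) that does not lie in $F$, I show $F \cup \{v\} \in \SC$, contradicting maximality. For $v = x_{a,b} \in P \setminus F_x$, the task is to check that $x_{a,b}\cdot F$ is not divisible by any generator of $\LT$. Family $A$ is excluded because $x_{a,b}$ is comparable with every element of $F_x$ (they all lie on $P$). Family $B$ is excluded because any $y_{s,t} \in F_y$ strictly south-east of $x_{a,b}$ would also be strictly south-east of $x_{i,j}$, reproducing the forbidden factor $x_{i,j}\, y_{s,t}$. Families $C$, $D$, $E$ are handled by a "re-routing" argument: since $x_{a,b}$ lies between $x_{i,j}$ and $x_{m,n}$ in the north-west/south-east order, any putative bad triple using $x_{a,b}$ together with vertices of $F$ induces a bad triple using $x_{i,j}$ or $x_{m,n}$ instead, which would already divide $F$. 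The case $v = y_{a,b}$ on $Q_1 \cup Q_2$ is analogous, re-routing this time through the known vertices $y_{i,n}, y_{m,j}, x_{i,j}, x_{m,n}$ of $F$, and exploiting that $y_{a,b}$ sits in $R_1 \cup R_2 \cup R_3$ on a path between one of the starts and one of the targets.

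The principal obstacle will be the bookkeeping for families $C$ and $D$, each of which mixes one $x$-vertex with two $y$-vertices in a specific diagonal/anti-diagonal pattern. Splitting into sub-cases according to which of the regions $R_1, R_2, R_3$ each $y$-vertex of a candidate bad triple lies in, and deciding in each case whether $x_{i,j}$ or $x_{m,n}$ should substitute for $x_{a,b}$ (and $y_{i,n}$ or $y_{m,j}$ for $y_{a,b}$), should reduce every configuration to a contradiction with $F \in \SC$. The partition in Figure \ref{regions1} will be the essential organizing device throughout this casework.
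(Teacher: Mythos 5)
Your two-stage plan (containment, then fullness-by-maximality) mirrors the paper's own strategy in broad outline, and the use of Lemmas \ref{lemma_Fx and Fy subsets of classical case}, \ref{other x's present}, and \ref{y's present} as the entry point is the same. The $F_x$ half is sound, and the re-routing idea for the fullness stage is workable. The problem is in your containment stage for $F_y$.

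You deduce $F_y \subseteq R_1 \cup R_2 \cup R_3$ from family $B$ alone and then assert that this, together with the path structure from Lemma \ref{lemma_Fx and Fy subsets of classical case} and the endpoints from Lemma \ref{y's present}, "immediately" places $F_y$ inside a pair of non-intersecting paths to $y_{i,n},y_{m,j}$. This does not go through when $\mu(F)$ lies on the boundary. If $i = m$ (resp.\ $j = n$), the region $R_4$ is empty, so the family-$B$ constraint gives no information at all about $F_y$; knowing only that $F_y$ avoids an $E$-type triple and contains $y_{m,n},y_{m,j}$ is not enough to force $F_y$ into a pair of paths ending at $y_{m,j}$ rather than $y_{m,n-1}$. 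Concretely, with $i=m$, $j=1$, $m=3$, $n=4$, the set $\{y_{1,2},y_{2,4},y_{3,1},y_{3,2},y_{3,4}\}$ contains $y_{i,n}$ and $y_{m,j}$, is $E$-free, avoids the empty $R_4$, yet is not a subset of any pair of non-intersecting paths to $y_{3,4},y_{3,1}$ (the vertex $y_{3,2}$ is incomparable to $y_{2,4}$ in the NW--SE order and cannot sit on the column-$1$ lower path). The only reason this configuration is actually excluded is family $C$: $x_{3,1}y_{3,2}y_{2,4}\in C$. The paper handles exactly this by arguing, in the $i=m$ case, that $C$ forbids two SW-related $y$-vertices in $R_1$ (so $f_y\cap R_1$ is a chain), and symmetrically that $D$ forbids two SW-related $y$-vertices in $R_3$ when $j=n$. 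Your proposal acknowledges that $C$ and $D$ are the bookkeeping obstacle, but it defers that work entirely to the fullness stage; in fact families $C$ and $D$ are already indispensable in the containment stage, and without them the containment claim is simply false in the $i=m$ and $j=n$ cases. You should add a case split on whether $\mu(F)$ is interior or on the right/bottom edge, and in the edge cases invoke $C$ (resp.\ $D$) together with $x_{i,j}\in F_x$ to show $F_y\cap R_1$ (resp.\ $F_y\cap R_3$) is a chain, as the paper does.
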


\begin{proof}


We will first show that $F=F_{x}F_{y}$ as described in the theorem is indeed a valid facet of $\SC$. Then we will argue that any facet of $\SC$ must have that form.


Let $F=F_{x}F_{y}$ with $F_{x}$  a path from $x_{i,j}$ to  $x_{m,n}$ and $F_{y}$  a family of non-intersecting paths from $y_{1,1}$, $y_{2,1}$ to $y_{i,n}$, $y_{m,j}$ be given (see Figure \ref{facet}). We will first show that $F$ is a facet of $\SC$ , i.e. $F$ is not divisible by monomials in $A$, $B$, $C$, $D$, or $E$ and $F$ is maximal with respect to inclusion.

\begin{figure}[h]
\includegraphics[scale=0.30]{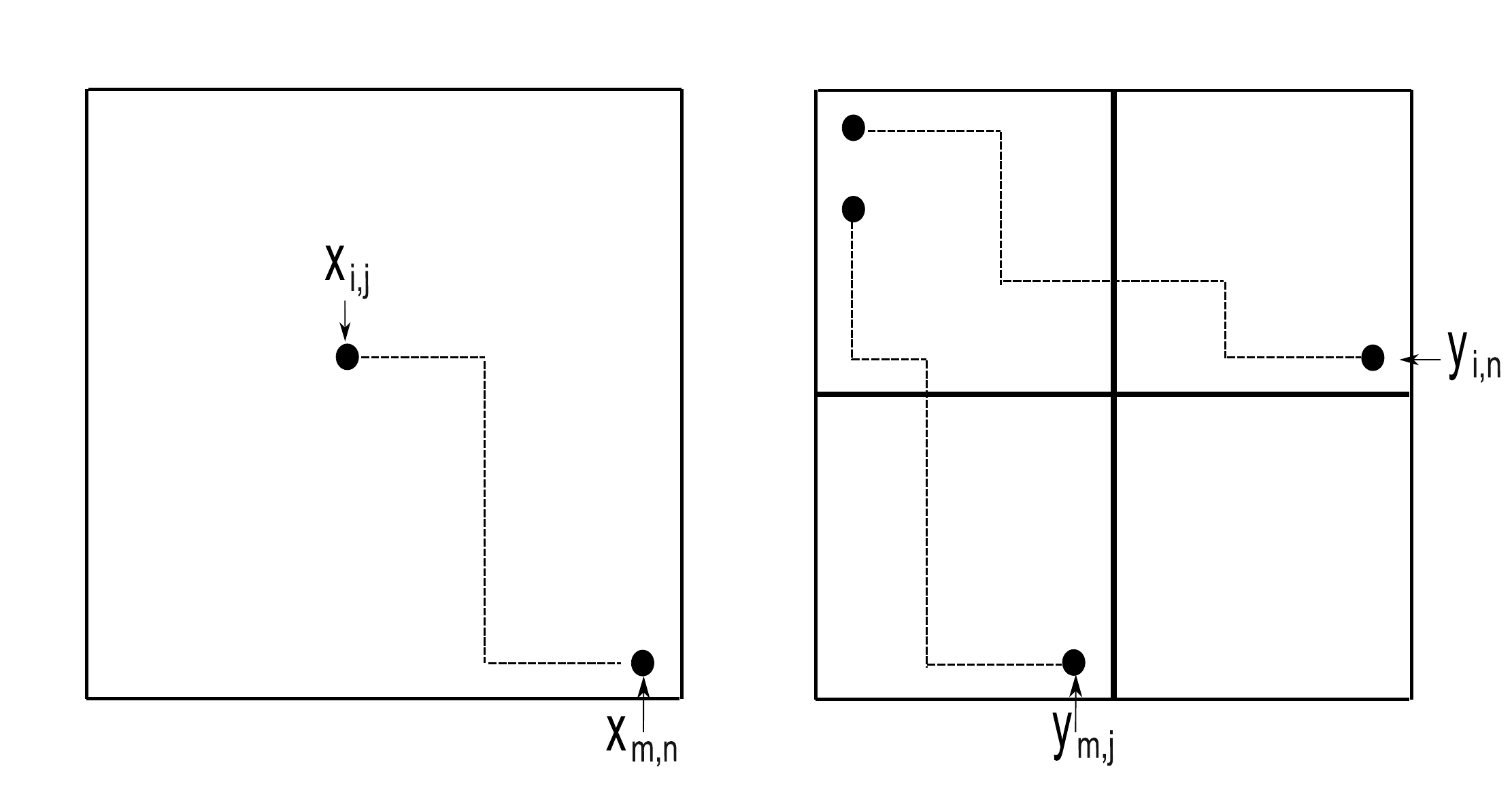}
\caption{ \label{facet}}
\end{figure}

Obviously, $F$ is not divisible by monomials in $A$ and $E$. To see that $F$ is not divisible by monomials in $B$, $C$, and $D$, it is enough to notice that F doesn't contain a $y$-variable in $R_4$ or variables of the form $y_{c,d}$ and $y_{e,f}$ such that one is to the south-west of the other and both are entirely in $R_1$ or $R_3$.



Next, we will show that $F=F_{x}F_{y}$ is maximal with respect to inclusion by arguing that any vertex attached to $F$ would make the resulting monomial divisible by some monomial in $A$, $B$, $C$, $D$, or $E$ (i.e. that resulting monomial can not be a face in $\SC$). Recall that by Lemma \ref{lemma_Fx and Fy subsets of classical case} $F_x$ is a subset of a path from $x_{1,1}$ to $x_{m,n}$. So, if we attach a vertex $x_{a,b}$ to $F_x$, it has to be to the north, west or north-west of $x_{i,j}$. But notice that in this case either $x_{a,b}y_{m,j}$ or $x_{a,b}y_{i,n}$, or both, would be a monomial in $B$ when $i \neq m $, $j \neq n$ (in the cases $i=m$ or $j=n$, $x_{a,b}$ can also produce monomials in $C$ and $D$). So, no $x$-vertex can be attached to $F$. Recall also that $F_y$ is a subset of a pair non-intersecting paths from $y_{1,1}$, $y_{2,1}$  to $y_{m,n}$, $y_{m,n-1}$ (Lemma \ref{lemma_Fx and Fy subsets of classical case}). So, if we attach a vertex $y_{c,d}$ to $F_y$, then $y_{c,d}$ must be in one of those two non-intersecting paths. If $i \neq m$, $j \neq n$ (see Figure \ref{threecases}), then $y_{c,d}$ must be  in $R_4$, but then $x_{i,j}y_{c,d}$ would be in $B$. If $i = m$, then $y_{c,d}$ must be  in $R_1$ and in row $m$. But then $x_{i,j}y_{c,d}$ and some $y$-variable that is in the upper path of $F_y$ and in $R_1$ would produce a monomial in $C$. Finally, if $j = n$, then $y_{c,d}$ must be  in $R_3$ and in column $n$. But then $x_{i,j}y_{c,d}$ and some $y$-variable that is in the lower path of $F_y$ and in $R_3$ would produce a monomial in $D$. So, no $y$-vertex can be attached to $F$ either. Thus, $F$ is maximal.

\begin{figure}[h]
\includegraphics[scale=0.30]{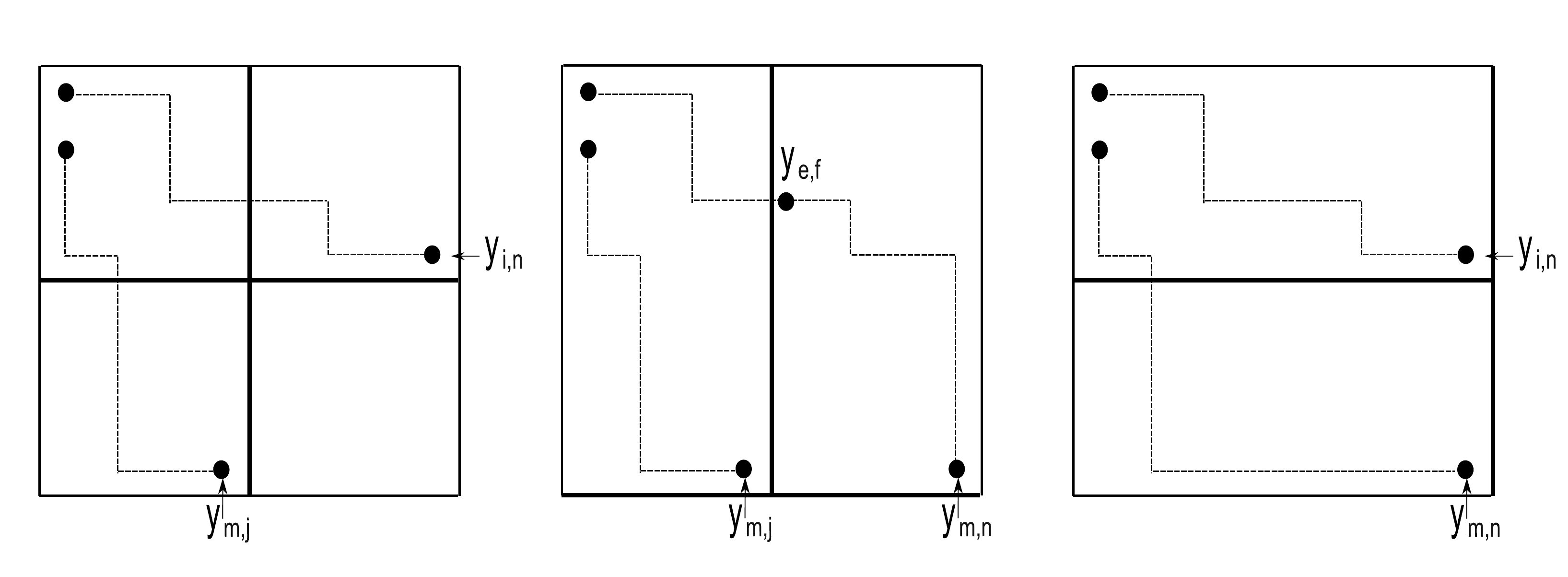}
\caption{ \label{threecases}}
\end{figure}


Finally, we will show that any facet $f=f_{x}f_{y}$ of $\SC$ with $\mu(f) = x_{i,j}$ must be of the form described in the theorem. Since $f_x$ is a subset of  a path from $x_{1,1}$ to $x_{m,n}$ (by Lemma \ref{lemma_Fx and Fy subsets of classical case}), and since $\mu(f) = x_{i,j}$, then it follows that $f_x$ must actually be a subset of a path from $x_{i,j}$ to $x_{m,n}$.

Next, again by Lemma \ref{lemma_Fx and Fy subsets of classical case}, $f_y$ must be a subset of a pair of non-intersecting paths from $y_{1,1}$, $y_{2,1}$  to $y_{m,n}$, $y_{m,n-1}$. By Lemma \ref{y's present}, it follows that $f_y$ must also contain $y_{i,n}$ and $y_{m,j}$.

Now, if $i \neq m$, $j \neq n$ (see Figure  \ref{threecases}), $f_y$ can not contain $y$-vertices in $R_4$, because $x_{i,j}$ and any vertex in that region is a monomial in $B$. So, $f_y$ must be a subset of a family of two non-intersecting paths from $y_{1,1}$, $y_{2,1}$ to $y_{i,n}$, $y_{m,j}$. Next, suppose $i = m$ (see Figure \ref{threecases}). Since $f_y$ is a subset of a pair of non-intersecting paths from $y_{1,1}$, $y_{2,1}$  to $y_{m,n}$, $y_{m,n-1}$, it is straightforward to verify, using maximality of $f$, that $f_y$ must contain the $y$-variable of the upper $y$-path that is furthest to north-west in $R_1$, call it $y_{e,f}$. Also, notice that there should be no $y$-vertices in $f_y \cap R_1$ such that one is to the south-west of the other (otherwise $x_{m,j}$ and those two $y$-vertices would produce a monomial in $C$). Therefore, $f_y \cap R_1$ must actually be a subset of a facet in $\Delta_2$ on vertex set $R_1$, i.e. $f_y \cap R_1$ must be a subset of some path in $R_1$ starting in
  $y_{e,f}$ and ending at $y_{m,n}$ (recall Proposition \ref{Prop_classical_facets}). So, $f_y$ must be a subset of a family of two non-intersecting paths from $y_{1,1}$, $y_{2,1}$ to $y_{i,n}$, $y_{m,j}$, $i=m$. Finally, we conclude the same result for the case $j=n$ (see Figure \ref{threecases}) using similar arguments from the case $i=m$.

Finally, notice that $f=f_xf_y$  is actually a subset of a some facet $F$ as described in the theorem. Maximality of $f$ implies that it actually has to be one of those facets $F$.

\end{proof}

Knowing the structure of a facet $F = F_xF_y$ of the simplicial complex $\SC$, we can easily count the number of vertices that $F$ is composed of, so we can determine $\dim F = |F|-1$. In particular, we see that the dimension of any facet $F$ is $2(m+n)-3$. Notice that the dimension of $F$ depends only on the constants $m$ and $n$. Thus, we can conclude that all facets of the simplicial complex $\SC$ have the same dimension, i.e. $\SC$ is a pure simplicial complex of dimension $2(m+n)-3$.

\begin{Corollary}
The dimension of $R/J$ is $2(m+n)-2$.
\end{Corollary}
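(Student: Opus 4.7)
The corollary follows by a short chain of standard reductions from the purity statement that immediately precedes it, so the plan is essentially bookkeeping.

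First I would verify the vertex count claimed in the paragraph before the corollary. By Theorem \ref{facets}, a facet $F = F_xF_y$ with $\mu(F) = x_{i,j}$ has $F_x$ a path from $x_{i,j}$ to $x_{m,n}$, and $F_y$ a pair of non-intersecting lattice paths from $y_{1,1}, y_{2,1}$ to $y_{i,n}, y_{m,j}$. Counting steps, $F_x$ contributes $(m-i)+(n-j)+1$ vertices, the $y_{1,1} \to y_{i,n}$ path contributes $(i-1)+(n-1)+1$ vertices, and the $y_{2,1} \to y_{m,j}$ path contributes $(m-2)+(j-1)+1$ vertices. Adding these gives $|F| = 2(m+n)-2$, independent of $(i,j)$. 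Hence $\SC$ is pure of dimension $2(m+n)-3$.

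Next I would invoke the standard formula for the Krull dimension of a Stanley-Reisner ring (see \cite[Chap.~1]{MiSt} or \cite[Chap.~5]{BrHe}): if $\Delta$ is a simplicial complex and $I_\Delta$ is its Stanley-Reisner ideal in a polynomial ring $S$, then $\dim(S/I_\Delta) = \dim(\Delta) + 1$. Applied to $\Delta = \SC$ and $I_\Delta = \LT$, this yields $\dim R/\LT = 2(m+n)-2$.

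Finally, I would appeal to the basic fact that passing to an initial ideal with respect to any monomial order preserves the Hilbert function, and hence the Krull dimension: $\dim R/J = \dim R/\LT$. Combining the two equalities gives $\dim R/J = 2(m+n)-2$. There is no real obstacle here — the only place where one could slip is the vertex count, so that is the step I would write out carefully; everything else is a citation.
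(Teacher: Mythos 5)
Your proof is correct and follows the same route the paper implicitly takes: count the vertices in a facet from Theorem \ref{facets} to get purity with $\dim\SC = 2(m+n)-3$, then apply $\dim R/\LT = \dim\SC + 1$ and the invariance of Hilbert function (hence Krull dimension) under passage to the initial ideal. The vertex count is verified correctly, and the two standard facts are invoked appropriately.
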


Theorem \ref{facets} also allows to determine the total number of facets in $\SC$. Thus, we can determine the multiplicity of $R/J$ as well.

\begin{Corollary}
The multiplicity  of $R/J$ is given by

\begin{eqnarray}
 \sum \limits_{(i,j), (i,j) \neq (m,n)}   \binom{m+n-i-j}{m-i}     det \left(
\begin{array}{cc}
  \binom{i+n-2}{i-1}   &  \binom{m+j-2}{m-1} \\
  \binom{i+n-3}{i-2}   &  \binom{m+j-3}{m-2} \\
\end{array}%
\right)
\end{eqnarray}

\end{Corollary}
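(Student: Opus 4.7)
The plan is to reduce the computation of the multiplicity to counting the facets of $\SC$, and then to enumerate those facets using the structural description given by Theorem~\ref{facets} combined with the Lindström--Gessel--Viennot lemma.

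First, since the Gröbner basis of $J$ gives that $R/J$ and $R/\LT$ share the same Hilbert function, their multiplicities (i.e.\ $e(R/J) = e(R/\LT)$) agree. Next, $R/\LT$ is by construction the Stanley--Reisner ring of the simplicial complex $\SC$. Because $\SC$ has already been shown to be pure of dimension $2(m+n)-3$, standard Stanley--Reisner theory (see e.g.\ \cite[Chap.~5]{BrHe}) gives that the multiplicity of $R/\LT$ equals the number of facets of $\SC$ of top dimension, which is simply the total number of facets of $\SC$. So the task reduces to enumerating $|\SC|_{\max}$.

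To count the facets, I would stratify by the value $\mu(F) = x_{i,j}$, which ranges over all pairs $(i,j)$ with $1 \le i \le m$, $1 \le j \le n$ and $(i,j) \neq (m,n)$ (by Lemma~\ref{other x's present}). For fixed $(i,j)$, Theorem~\ref{facets} tells us that $F_x$ is an arbitrary monotone lattice path from $x_{i,j}$ to $x_{m,n}$, and $F_y$ is an arbitrary pair of non-intersecting monotone lattice paths from the sources $\{y_{1,1}, y_{2,1}\}$ to the sinks $\{y_{i,n}, y_{m,j}\}$; these two choices are independent, so the contribution from each $(i,j)$ is the product of the two counts. The number of monotone paths from $x_{i,j}$ to $x_{m,n}$ is the standard binomial coefficient $\binom{m+n-i-j}{m-i}$.

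For the non-intersecting paths, I would invoke the Lindström--Gessel--Viennot lemma: the number of pairs of non-intersecting paths from $(y_{1,1}, y_{2,1})$ to $(y_{i,n}, y_{m,j})$ equals the determinant of the $2 \times 2$ matrix $M$ whose $(k,\ell)$-entry is the number of single paths from the $k$-th source to the $\ell$-th sink. A direct count gives $\binom{i+n-2}{i-1}$ paths from $y_{1,1}$ to $y_{i,n}$, $\binom{m+j-2}{m-1}$ from $y_{1,1}$ to $y_{m,j}$, $\binom{i+n-3}{i-2}$ from $y_{2,1}$ to $y_{i,n}$, and $\binom{m+j-3}{m-2}$ from $y_{2,1}$ to $y_{m,j}$, so $M$ is precisely the matrix appearing in the stated formula. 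Summing these contributions over all admissible $(i,j)$ gives the claimed identity; one should verify that the LGV hypotheses hold (the two source--sink pairings other than the identity must be automatically intersecting), which in this grid setup is immediate because the sources lie in column $1$ and one sink lies in the last column while the other lies in the last row. The only delicate point is to handle the boundary values $i = 1, 2$ or $j = 1$ correctly, but with the usual convention $\binom{a}{b} = 0$ for $b < 0$ these vanish in exactly the right places, so no separate case analysis is needed.
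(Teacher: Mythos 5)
Your proof is correct and takes essentially the same approach as the paper: stratify the facets of $\SC$ by $\mu(F) = x_{i,j}$, count the monotone path for $F_x$ by a binomial coefficient, and count the non-intersecting path pair for $F_y$ by the Lindstr\"om--Gessel--Viennot determinant (the paper cites Krattenthaler for exactly this). You additionally spell out the reduction from multiplicity to facet count via the Gr\"obner degeneration and Stanley--Reisner theory, which the paper leaves implicit but which is the standard justification.
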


\begin{proof}
The number of paths from $x_{i,j}$ to $x_{m,n}$ is $\binom{m+n-i-j}{m-i}$, while the number of non-intersecting paths from $y_{1,1}$, $y_{2,1}$  to $y_{i,n}$, $y_{m,j}$ is given by (see \cite[\S 2.2]{Krat2})
\begin{eqnarray}
det \left(
\begin{array}{cc}
  \binom{i+n-2}{i-1}   &  \binom{m+j-2}{m-1} \\
  \binom{i+n-3}{i-2}   &  \binom{m+j-3}{m-2} \\
\end{array}%
\right) \nonumber
\end{eqnarray}

\end{proof}


\begin{Remark}
 Professor Sudhir Ghorpade (\cite{Ghor}) has shown that the expression for the multiplicity of $R/J$ above simplifies to ${{n+m-2}
\choose{m-1}}^2$ .
\end{Remark}

\section{Shellability of $\SC$} \label{SecnShellability}

The main goal of this section is to prove that our simplicial complex $\SC$ is shellable. Recall the following definition of shellability:

 \begin{Definition} \label{shellability_def}
 A simplicial complex $\Delta$ is \emph{shellable} if it is pure and if its facets can be given a total order, say $F_1$,$F_2$, $\dots$, $F_e$, so that the following condition holds: for all $i$ and $j$ with $1 \leq j < i \leq e$ there exists $v \in F_i \setminus F_j$ and an index $k$, $1 \leq k < i$, such that $F_i \setminus F_k = \{v\}$. A total order of the facets satisfying this condition is called \emph{shelling} of $\Delta$.
 \end{Definition}

 \begin{Theorem} \label{shellability_thm}
The simplicial complex $\SC$ is shellable.
 \end{Theorem}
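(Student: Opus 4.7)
The plan is to define an explicit total order on the facets of $\SC$ and verify the condition of Definition \ref{shellability_def}. By Theorem \ref{facets}, each facet $F$ is encoded by the triple $(\mu(F), F_x, F_y)$, with $\mu(F) = x_{i,j}$ the northwest corner, $F_x$ a monotone lattice path from $x_{i,j}$ to $x_{m,n}$, and $F_y$ a pair of non-intersecting lattice paths from $\{y_{1,1}, y_{2,1}\}$ to $\{y_{i,n}, y_{m,j}\}$. I would order the facets lexicographically on this triple: first on $\mu(F)$, placing those with $\mu$ closer to $x_{m,n}$ earlier (so that facets with shorter $F_x$ appear first); then on $F_x$, encoded as its sequence of right/down steps read from $\mu(F)$; finally on $F_y$, encoded analogously for its two component paths.

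To verify the shelling condition, I would consider an arbitrary pair $F_j \prec F_i$ and produce a single vertex $v \in F_i \setminus F_j$ and an earlier facet $F_k \prec F_i$ of the form $F_k = (F_i \setminus \{v\}) \cup \{w\}$. The construction splits by the first coordinate of the triple at which $F_i$ and $F_j$ disagree. When $F_i$ and $F_j$ share $\mu$ and $F_x$ but differ only in $F_y$, I would apply the classical shelling move for non-intersecting lattice paths: identify the first square where one of $F_i$'s $y$-paths turns ``later'' than the corresponding path in $F_j$, and flip the corner at that square to obtain $F_k$. An analogous move, essentially the shelling for the Stanley-Reisner complex of $LT(I_2)$ per Proposition \ref{Prop_classical_facets}, handles the case where only $F_x$ differs. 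When $\mu(F_i) \neq \mu(F_j)$, I would shift $\mu(F_i) = x_{i,j}$ one step toward $\mu(F_j)$ (say to $x_{i+1,j}$ or $x_{i,j+1}$), delete $\mu(F_i)$ from $F_x$, and extend the appropriate $y$-path of $F_i$ by one vertex to reach the shifted endpoint; the resulting $F_k$ has $\mu(F_k)$ closer to $x_{m,n}$, so $F_k \prec F_i$, and differs from $F_i$ by a single vertex swap.

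The hardest step will be the $\mu$-shift case in general. For example, moving $\mu$ from $x_{i,j}$ to $x_{i,j+1}$ forces the lower $y$-endpoint to shift from $y_{m,j}$ to $y_{m,j+1}$, and one must show the candidate $F_k$ obtained by removing $x_{i,j}$ and adjoining $y_{m,j+1}$ is a valid facet: the lower $y$-path of $F_i$ must extend naturally through the new endpoint without violating non-intersection with the upper path or creating a forbidden vertex in region $R_4$ defined before Theorem \ref{facets}. I expect this extension to always be available because $F_i$'s lower path already ends at $y_{m,j}$, which is adjacent to $y_{m,j+1}$, and the non-intersection constraint is preserved since $y_{m,j+1}$ sits in row $m$ outside the trajectory of the upper path. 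The boundary cases $i = m$ and $j = n$, where one $y$-path degenerates (exactly as flagged at the end of the proof of Theorem \ref{facets}), require separate treatment under the same template. Completing this combinatorial case analysis and checking that the lex ordering on $F_y$ accommodates every reroute required during repeated $\mu$-shifts is where I expect the bulk of the work to lie.
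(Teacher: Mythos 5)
Your proposal is essentially the same as the paper's proof: you order facets by the triple $(\mu(F), F_x, F_y)$ with $\mu$ closer to $x_{m,n}$ first, then use corner-flip moves on $F_x$ and on the two $y$-paths, and for the $\mu$-shift case delete $\mu(F_i)$ from $F_x$ and extend one $y$-path to the new endpoint. The paper fills in the details you flag as remaining, notably the special case $\mu(Q)=x_{m-1,n}$ (where deleting $\mu(Q)$ would leave a single $x$-vertex, so one must instead swap $x_{m-1,n}$ for $x_{m,n-1}$ and re-parse $Q_y$) and the ``double cascade'' case where flipping a corner of the upper $y$-path collides with the lower $y$-path and a two-step flip is needed.
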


 \begin{proof}
 Note that at the end of the previous section we have argued that $\SC$ is pure. We will proceed by first giving a partial order to the facets of $\SC$. Let $P=P_xP_y$ and $Q=Q_xQ_y$ be two facets of $\SC$. If $\mu(P)$ is in a row below $\mu(Q)$, we set $P<Q$ (see Figure \ref{partialorder}). If $\mu(P)$ and $\mu(Q)$ are in the same row, but $\mu(P)$ is to the right of $\mu(Q)$, we set $P<Q$ (see Figure \ref{partialorder}). If $\mu(P)=\mu(Q)$ but $P_x$ is to the right of $Q_x$ as one goes from $\mu(P)$ to $x_{m,n}$, then $P<Q$ (see Figure \ref{partialorder}). If $P_x = Q_x$ and the upper $y$-path of $P_y$ goes to the right of the upper $y$-path of $Q_y$, we set $P<Q$. Finally, if $P_x = Q_x$, the upper $y$-path of $P_y$ is the same as the upper $y$-path of $Q_y$ and the lower $y$-path of $P_y$ goes to the right of the lower $y$-path of $Q_y$, we set $P<Q$. Now we arbitrarily extend this partial order on the facets of $\SC$ to a total order.

\begin{figure}[h]
\includegraphics[scale=0.30]{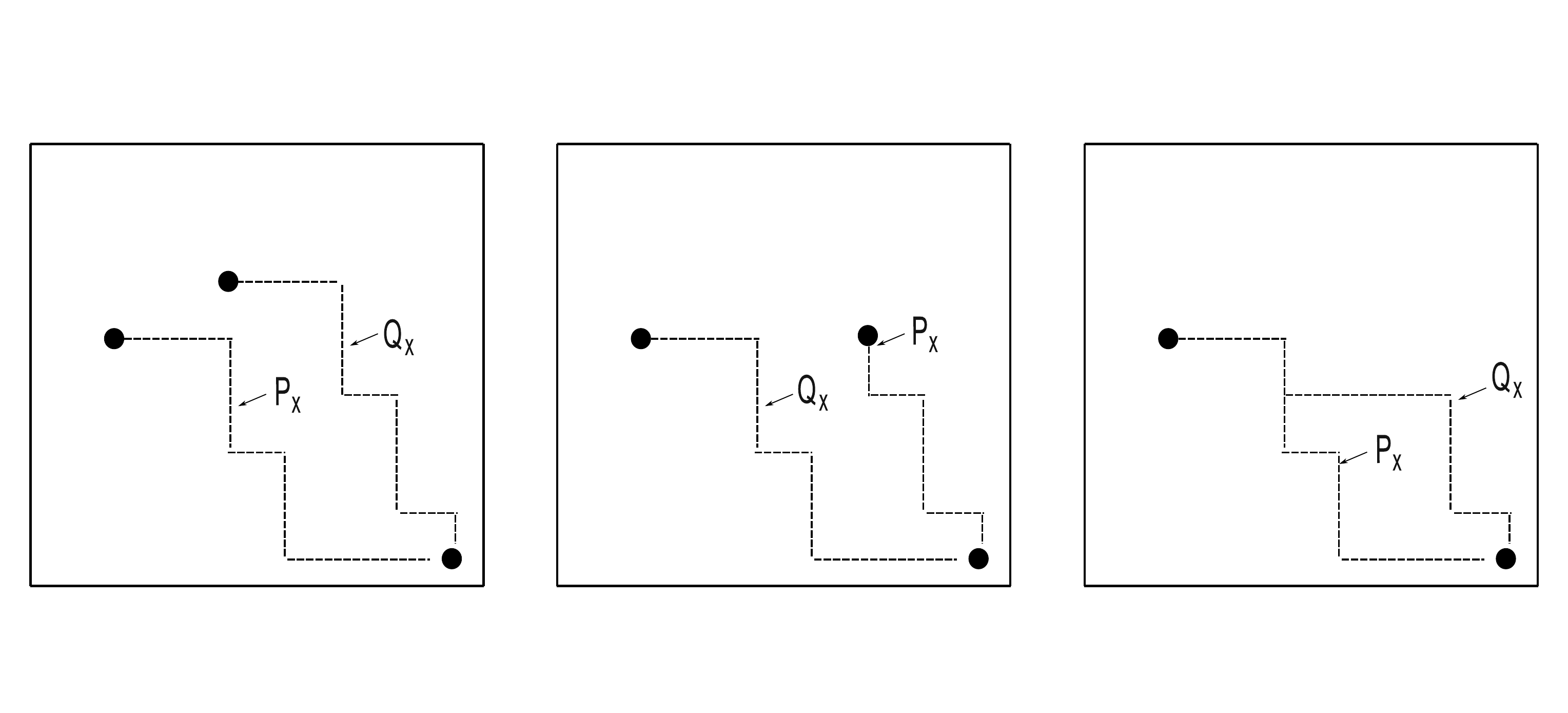}
\caption{ \label{partialorder}}
\end{figure}

 Now we will prove that the selected total order is indeed a shelling of $\SC$. Let $P=P_xP_y$ and $Q=Q_xQ_y$ be two facets of $\SC$ such that $P<Q$. Our goal is to find $v \in Q \setminus P$ and a facet $R < Q$ such that $Q \setminus R = \{v\}$.   Suppose that $\mu(P) \neq \mu(Q)$. Notice $P$ can not contain $\mu(Q) = x_{i,j}$ (otherwise $P<Q$ is contradicted). Take $v = x_{i,j}$. Take $R=R_xR_y$ to be the following: $R_x =  Q_x \setminus x_{i,j}$ and $R_y = Q_yy_{m,j+1}$ if $\mu(R) = x_{i,j+1}$ or $R_y = Q_yy_{i+1,n}$ if $\mu(R) = x_{i+1,j}$. In the special case $\mu(Q) = x_{m-1,n}$,  take $R_x = x_{m,n-1}x_{m,n}$, $R_y = Q_y$.

Next, suppose that $\mu(P) = \mu(Q)$, but $P_x \neq Q_x$. Then, there must be a right turn $H = x_{a,b}$ in $Q_x$ that is not in $P_x$ or else $Q_x$ would be to the right of $P_x$, contradicting $P<Q$. So, in this case take $v = H=x_{a,b}$ and $R=R_xR_y$ where $R_x = Q_x$ with $H=x_{a,b}$ replaced by $x_{a+1,b-1}$ and $R_y=Q_y$.

Next, suppose that $P_x = Q_x$ and the upper $y$-paths of the two facets are different. Notice that the upper path of $Q_y$ can not be strictly on the right of the upper path of $P_y$ (otherwise $P<Q$ is contradicted). So, there must be a right turn $H = y_{c,d}$ of the upper path of $Q_y$ strictly on the left of the upper path of $P_y$. Thus, $H = y_{c,d}$ can not be in $P_y$. So, take $v = y_{c,d}$. If $y_{c+1,d-1}$ is not in the lower path of $Q_y$, let $R=R_xR_y$ be the following facet: $R_x = Q_x$ and $R_y = Q_y$ with $y_{c,d}$ replaced by $y_{c+1,d-1}$. If $y_{c+1,d-1}$ is in the lower path of $Q_y$ (see Figure \ref{doublecascade}), then notice that $y_{c+1,d-1}$ must be a right turn as well. Then take $R=R_xR_y$ to be the following facet: $R_x = Q_x$ and $R_y$ is obtained from $Q_y$ be removing $y_{c,d}$ and by adding $y_{c+2,d-2}$.

\begin{figure}[h]
\includegraphics[scale=0.20]{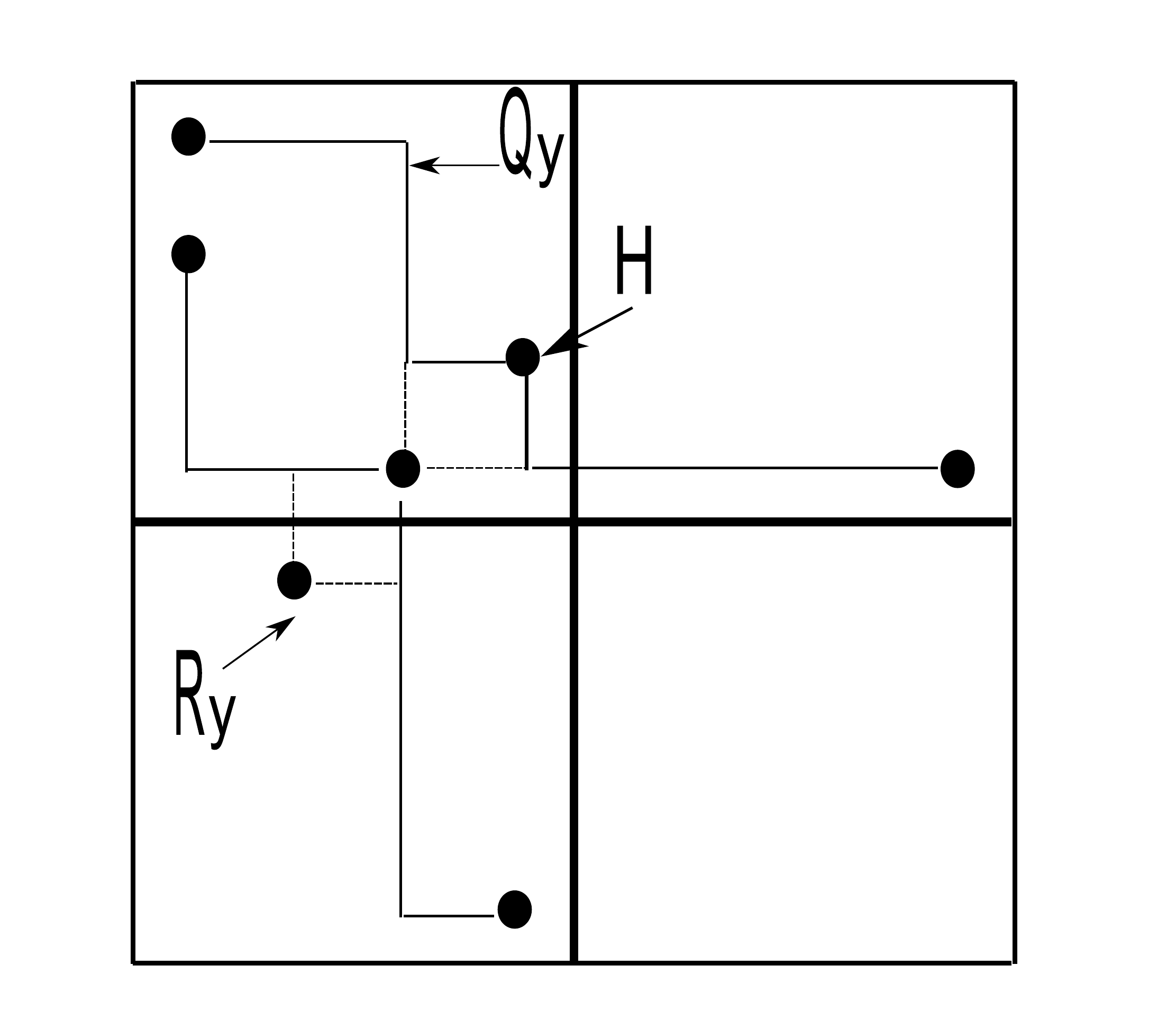}
\caption{ \label{doublecascade}}
\end{figure}

Finally, suppose that $P_x = Q_x$, the upper $y$-paths of the two facets are the same, but the lower $y$-paths are different. Similarly as in the previous paragraph, we see that there must be a right turn $H = y_{e,f}$ of the lower path of $Q_y$ strictly on the left of the lower path of $P_y$. Notice that $H = y_{e,f}$ can not be in the upper path of $P_y$ because it is the same as the upper path of $Q_y$. So, take $v = y_{e,f}$. Let $R=R_xR_y$ be the facet: $R_x = Q_x$ and $R_y = Q_y$ with $y_{e,f}$ replaced by $y_{e+1,f-1}$.

 \end{proof}

We are now in position to prove Theorem \ref{Cohen-Macaulay}, the main result of the paper:

%
%

\begin{proof}[Proof of Theorem \ref{Cohen-Macaulay}]
By standard
results, the ring $R/J$ is Cohen-Macaulay if the ring $R/\LT$ is
Cohen-Macaulay (see \cite[Corollary 8.31]{MiSt}.  By construction,
$R/\LT$ is precisely the Stanley-Reisner ring associated to $\SC$,
and since $\SC$ is shellable, $R/\LT$ will necessarily be
Cohen-Macaulay (see \cite[Theorem 5.1.13]{BrHe}).
\end{proof}

\end{document}